\documentclass{amsart}
\pdfoutput=1

%-------Packages---------
\usepackage{amsmath,amsthm,amssymb,amsfonts}
\usepackage{mathtools}
\usepackage[all,arc]{xy}
\usepackage[shortlabels]{enumitem}
\usepackage{mathrsfs}
\usepackage{tikz-cd}
\usepackage[backend=biber, style=numeric]{biblatex}
\usepackage{preamble}

%--------Theorem Environments--------
%theoremstyle{plain} --- default
\newtheorem{thm}{Theorem}[section]
\newtheorem{thmx}{Theorem}
	 %for main theorems

\newtheorem{prp}[thm]{Proposition}
\newtheorem{lem}[thm]{Lemma}

\newtheorem{quest}{Question}

\theoremstyle{definition}
\newtheorem{defn}[thm]{Definition}

\theoremstyle{remark}
\newtheorem{rmk}[thm]{Remark}

\makeatletter
\let\c@equation\c@thm
\makeatother
\numberwithin{equation}{section}

\setitemize{noitemsep,topsep=0pt,parsep=0pt,partopsep=0pt}

\setcounter{tocdepth}{1}

\addbibresource{./refs.bib}

%--------Meta Data: Fill in your info------
\title{Complex Orientations are Partial Strictifications of the Unit}

\author{Doron Grossman-Naples}

\date{}

\begin{document}

\begin{abstract}

We give a higher-algebraic interpretation of complex orientations of ring spectra as ``\(\E_2\) strictifications'' of the identity element. We show that higher strictifications do not exist for most ring spectra of interest in chromatic homotopy theory.

\end{abstract}

\maketitle

\tableofcontents

\section{Introduction}
Let \(R\) be a ring spectrum. A \emph{complex orientation} of \(R\) is an element \(c\in R^2(\CPinf)\) whose restriction along the map \(\C\bP^1\xhookrightarrow{}\CPinf\) corresponds to \(1\) under the natural identification \(R^2(\C\bP^1)\cong\pi_0(R)\). Geometrically, this can be regarded as a generalized Chern class: it is a degree \(2\) \(R\)-valued characteristic class for complex line bundles, and the restriction condition calibrates it to behave well under tensor product (which is necessary for it to induce a formal group law).

As chromatic homotopy theory has developed, however, it has become clear that it is not only a computational tool for understanding geometric objects, but also a sort of analogue of algebraic number theory for Brave New Algebra. This motif appears in Balmer's interpetation of the Thick Subcategory Theorem (\cite{balmer2020},\cite{barthel2020}), for example, and Lurie's (\cite{lurie2018}) and Gregoric's (\cite{gregoric2021}) work on the moduli stacks of oriented elliptic curves and formal groups (respectively), as well as more speculative work such as Salch's ``topological Langlands program'' (\cite{salch2023a},\cite{salch2023}). It would be highly desirable to have an interpretation of complex orientations purely in terms of this higher algebra, but one is presently lacking.

The purpose of this note is to provide such an interpretation. This is achieved using what we call ``strict elements'', inspired by Hopkins's notion of ``strict units''. Our main theorem is as follows:
\begin{thmx} \label{thm:A}
Let \(R\) be a ring spectrum. Then complex orientations of \(R\) are naturally equivalent to \(\E_2\)-strictifications of the element \(1\in\pi_0 R\).
\end{thmx}

The main idea of the theorem is that the map \(\C\bP^1\to\CPinf\) is a sort of transposed Hurewicz map, using the equivalences \(\C\bP^1\cong S^2\) and \(\CPinf\simeq K(\Z,2)\). Explicitly, this arises from the fact that \(\CPinf\) is the free strict abelian group on the pointed space \(S^2\) (\cite{lurie2018}). This allows us to identify the map defining complex orientations with a certain map defined purely in terms of \(\E_2\)- and \(\E_1\)-structures on the relevant spaces.

A natural question to ask is what it looks like if we replace \(\E_2\) with \(\E_n\) for \(3\leq n\leq\infty\). Our other main theorem shows that, in most cases of chromatic interest, this is vacuous.
\begin{thmx} \label{thm:B}
If a ring spectrum \(R\) has a higher complex orientation, then \(L_{K(n)}R\simeq 0\) at all primes \(p\) for each \(0<n<\infty\).
\end{thmx}
That is to say, a ring spectrum with a higher complex orientation is supported at chromatic heights \(0\) and \(\infty\).\footnote{While this theorem is stated in terms of chromatic support, it is equivalent to describe it in terms of telescopic support. This is because, by a theorem of Hovey (\cite{hovey1995}), \(T(n)\)- and \(K(n)\)-localization are equivalent for complex-oriented ring spectra.} This theorem is proven using Ravenel and Wilson's computation of the \(K(n)\)-cohomology of Eilenberg-MacLane spaces (\cite{ravenel1980}).

\begin{rmk}These results help to explain the central role played by even-periodicity in chromatic homotopy theory. An \(\Einf\)-ring \(R\) is complex-periodic if and only if there is an oriented formal group over \(R\), and this is ultimately the case because \(\CPinf\) is the free strict abelian group on \(S^2\). Using this relation shows that the Bott map of an orientation (see \cite{lurie2018}) will factor through the map \(S^2\to\CPinf\) to give the lifting in the definition of a complex orientation.

The ``\(1\)-periodic'' version of the story is trivial because \(S^1\simeq K(\Z,1)\); the \(2\)-periodic version can be described in terms of \(\E_2\)-strictifications by Theorem A; and the \(n\)-periodic version for \(n\geq 3\) (which would involve factoring through the map \(S^n\to K(\Z,n)\)) will not occur in cases of chromatic interest by Theorem B. Thus the strictification process allowing us to describe the behavior of these rings in terms of formal groups only works in degree \(2\).
\end{rmk}

\subsection{Conventions}
We adhere to the following notation and terminology throughout this paper.

\begin{itemize}[leftmargin=*]
\item We use the term ``ring spectrum'' to mean ``homotopy ring spectrum'', and specify ``\(\Einf\)-ring'' when we wish the spectrum to have highly structured multiplication.
\item We denote by \(\Omega^{\infty-n}X\) the \(n\)th space in an \(\Omega\)-spectrum model for a spectrum \(X\), and similarly we denote by \(\Sigma^{\infty-n}X\) the spectrum \(\Sigma^{-n}\Sigma^{\infty}X\) for any pointed space \(X\).
\item We work with \(\infty\)-categories throughout, and all constructions are implicitly taken to be their \(\infty\)-categorical versions (e.g. ``pullback'' means ``homotopy pullback'').
\item We write \(\Map_{\E_n}(X,Y)\) to denote the mapping space in the category of (grouplike) \(\E_n\)-spaces and \(\Map_*(X,Y)\) to denote the mapping space in the category of pointed spaces.
\item We refer to the sphere spectrum as \(\bbS\).
\end{itemize}

\subsection{Acknowledgments}
I would like to thank my advisor Charles Rezk for helpful conversations on this topic, and in particular for suggesting the method used to prove Theorem \ref{thm:B}. Thanks are also due to Florian Reidel, who helped me resolve an error regarding corepresentability of strict elements, and to Robert Burklund, who pointed out that Theorem \ref{thm:B} could be strengthened to apply to all ring spectra.

\section{Strict Elements}
In classical algebra, we have the pleasant coincidence that the free group on one element is also the free abelian group on one element. This makes it easy to say what an element of an abelian group is: a map out of \(\Z\). In the homotopical context, though, things are a little more complicated. The free \(\infty\)-group on one element, \(\Z\), does indeed support an \(\Einf\) structure; but since this is a structure rather than a property, it is not the \emph{free} \(\Einf\) structure.\footnote{Moreover, confusingly, the free \(\E_n\)-group on one element (\(1<n<\infty\)) does \emph{not} admit an \(\Einf\) structure, since \(\Omega^n S^n\) is not an infinite loop space.}

We are therefore lead to ask: what is an ``element'' of a spectrum \(X\)? When \(X\) is a space, we might reasonably use this to refer an element of \(\pi_0 X\). In the spectral case, however, this definition is problematic. Part of the problem is philosophical: this fails to take into account the structure of a spectrum beyond its infinite loop space. In particular, when \(X\) is a connective spectrum identified with a grouplike \(\Einf\)-space, an element in this sense completely ignores the \(\Einf\) structure. However, this notion of element also has practical issues, as described in the following motivating example (c.f. the first chapter of \cite{lurie2018}).
\subsection*{Example}
Let \(R\) be an \(\Einf\)-ring, and suppose we wish to talk about the ``units'' of \(R\). One natural way to do this is by taking the pullback
\[\begin{tikzcd}
GL_1(R) \ar[d] \ar[r] & \Omega^{\infty}R \ar[d]\\
\pi_0(R)^{\times} \ar[r] & \pi_0(R).
\end{tikzcd}\]
The space \(GL_1(R)\), which is called the ``space of units'', carries an \(\Einf\)-structure inherited from the multiplication on \(R\); and, moreover, it is grouplike due to our restriction to the units of \(\pi_0 R\). It therefore lifts to a spectrum \(gl_1(R)\), the ``spectrum of units''. The functor \(gl_1:\CAlg\to\Sp\) is corepresentable by \(\bbS\{t^{\pm}\}\), the free \(\Einf\)-ring on a single invertible element.

We have a problem here: \(\bbS\{t^{\pm}\}\) is not flat over the sphere spectrum! Moreover, its homotopy groups are terribly unpleasant. For both of these reasons, its completion is not a formal group in the sense of Lurie, which limits its usefulness in chromatic homotopy theory. Even though it is the most literal version of the units of \(R\), it fails to behave like it should. This was the motivation behind Hopkins's definition of the \emph{strict} multiplicative group, \(\GG_m(R)\vcentcolon=\tau_{\geq 0}\Map_{\Sp}(H\Z,gl_1(R))\). The functor \(\GG_m\) is corepresented by the \(\Einf\)-ring of Laurent polynomials in one variable, \(\bbS[t^{\pm}]=\Sigma_+^{\infty}\Z\), which is flat over \(\bbS\) and has nice homotopy groups \(\pi_*\bbS[t^{\pm}])=(\pi_*\bbS)[t^{\pm}]\).

It is in this spirit that we make the following definition.
\begin{defn}
A \emph{strict element} of a spectrum \(X\) is an element of \(\pi_0\Map_{\Sp}(H\Z,X)\). We call \(\Map_{\Sp}(H\Z,X)\) the \emph{space of strict elements} of \(X\). 
\end{defn}
For the sake of clarity, we will refer to elements of \(\pi_0 X\) as \emph{weak elements}, and to \(\Omega^{\infty} X=\Map_{\Sp}(\bbS,X)\) as the \emph{space of weak elements}.

\begin{rmk}
The evident forgetful map from strict elements to weak elements is corepresented by the unit map \(\bbS\to H\Z\), which implies in particular that weak and strict elements coincide for rational spectra.
\end{rmk}

We can go further with this. Observe that we have \(\Map_{\Sp}(H\Z,X)\simeq\Map_{\Einf}(\Z,\Omega^{\infty}X)\) and \(\pi_0 X\simeq\Map_{\E_1}(\Z,\Omega^{\infty}X)\). It is reasonable to consider notions of ``partially strict'' elements that interpolate between these:

\begin{defn}
An \emph{\(\E_n\)-element} of a spectrum \(X\) is an \(\E_n\)-map \(\Z\to\Omega^{\infty} X\), and the \emph{space of \(\E_n\)-elements} is \(\Map_{\E_n}(\Z,\Omega^{\infty} X)\).
\end{defn}
Note that this recovers weak elements for \(n=1\) and strict elements for \(n=\infty\). The promised interpolation can be represented by the following towers of mapping spaces.

\[\begin{tikzcd}
{} & {} & \Map_{\Einf}(\Z,\Omega^{\infty}X) \ar[d]\\
\vdots \ar[r, phantom,"\simeq"] \ar[d] & \vdots \ar[r, phantom,"\simeq"] \ar[d] & \vdots \ar[d]\\
\Map_{\E_1}(\C\bP^{\infty},\Omega^{\infty-2}X) \ar[r, phantom, "\simeq"] \ar[d] & \Map_{\E_2}(S^1,\Omega^{\infty-1}X) \ar[r, phantom, "\simeq"] \ar[d] & \Map_{\E_3}(\Z,\Omega^{\infty}X) \ar[d]\\
\Map_*(\C\bP^{\infty},\Omega^{\infty-2}X) \ar[r, phantom, "\simeq"] & \Map_{\E_1}(S^1,\Omega^{\infty-1}X) \ar[r, phantom, "\simeq"] \ar[d] & \Map_{\E_2}(\Z,\Omega^{\infty}X) \ar[d]\\
{} & \Map_*(S^1,\Omega^{\infty-1}X) \ar[r,phantom,"\simeq"] & \Map_{\E_1}(\Z,\Omega^{\infty}X).
\end{tikzcd}\]

The left and center towers are obtained from the right via the loop space recognition principle. Comparing towers, we see that

\begin{enumerate}[i)]
\item \(\pi_0\Map_{\E_1}(\Z,\Omega^{\infty}X)\cong\pi_0 X\),
\item \(\pi_0\Map_{\E_2}(\Z,\Omega^{\infty}X)\cong X^2(\C\bP^{\infty})\), and
\item The forgetful functor \(E_2(\Spaces_*)^{gp}\to E_1(\Spaces_*)^{gp}\) induces a map \(f_{2\to1}:X^2(\C\bP^{\infty})\to\pi_0 X\).
\end{enumerate}

The key insight behind our main theorem is that the identifications \(\C\bP^1\cong S^2\) and \(\CPinf\simeq K(\Z,2)\) allow us to give a purely algebraic description of the restriction map in cohomology.

\begin{lem}
The map \(f_{2\to 1}:X^2(\C\bP^{\infty})\to\pi_0 X\) coincides with the map \(X^2(\CPinf)\to X^2(\C\bP^1)\cong\pi_0 X\) induced by the standard inclusion \(\C\bP^1\xhookrightarrow{}\CPinf\).
\end{lem}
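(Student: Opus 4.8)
The plan is to exhibit a homotopy-commutative square of spaces
\[\begin{tikzcd}
\Map_{\E_2}(\Z,\Omega^{\infty}X) \ar[r,"U"] \ar[d,"\simeq"'] & \Map_{\E_1}(\Z,\Omega^{\infty}X) \ar[d,"\simeq"] \\
\Map_*(\CPinf,\Omega^{\infty-2}X) \ar[r] & \Map_*(\C\bP^1,\Omega^{\infty-2}X)
\end{tikzcd}\]
in which the top arrow \(U\) is induced by the forgetful functor \(E_2(\Spaces_*)^{gp}\to E_1(\Spaces_*)^{gp}\) (so that on \(\pi_0\) it is \(f_{2\to1}\)), the vertical equivalences are the delooping identifications of the left and center columns of the interpolation tower, and the bottom arrow is restriction along the standard inclusion \(\C\bP^1\hookrightarrow\CPinf\). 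Passing to \(\pi_0\) and using the identifications \(\pi_0\Map_*(\CPinf,\Omega^{\infty-2}X)\cong X^2(\CPinf)\) and \(\pi_0\Map_*(\C\bP^1,\Omega^{\infty-2}X)=X^2(\C\bP^1)\cong\pi_0 X\) then yields exactly the claim. So the entire content is the homotopy-commutativity of this square, together with the verification that its right-hand identification is the one implicit in \(X^2(\C\bP^1)\cong\pi_0 X\).

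To establish commutativity I would chain two elementary facts. First, under the recognition principle the forgetful functor \(E_2\to E_1\) corresponds to looping the classifying space: for a grouplike \(\E_2\)-space \(A\) one has \(B(UA)\simeq\Omega(B^2 A)\), naturally in \(A\), since \(A\simeq\Omega^2 B^2 A\) exhibits the connected space \(\Omega B^2 A\) as the \(\E_1\)-delooping of \(UA\). Applying this to \(\Z\) and to \(\Omega^{\infty}X\), and using \(B^2\Z\simeq\CPinf\), \(B\Z\simeq S^1\), \(B^2\Omega^{\infty}X\simeq\Omega^{\infty-2}X\), \(B\Omega^{\infty}X\simeq\Omega^{\infty-1}X\), identifies \(U\) with the loop map \(\Omega\colon\Map_*(\CPinf,\Omega^{\infty-2}X)\to\Map_*(S^1,\Omega^{\infty-1}X)\). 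Second, I would pass through the loop--suspension adjunction: the composite of \(\Omega\) with the adjunction equivalence \(\Map_*(S^1,\Omega\,\Omega^{\infty-2}X)\simeq\Map_*(\Sigma S^1,\Omega^{\infty-2}X)=\Map_*(S^2,\Omega^{\infty-2}X)\) sends a map \(\psi\) to \(\psi\circ\epsilon\), where \(\epsilon\colon\Sigma\Omega\CPinf\to\CPinf\) is the counit. This is the standard identity that the adjunct of \(\Omega\psi\) is \(\psi\) precomposed with the counit.

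The crux -- and the step I expect to be the main obstacle -- is the identification of the counit \(\epsilon\colon S^2=\Sigma\Omega\CPinf\to\CPinf\) with the geometric inclusion \(\C\bP^1\hookrightarrow\CPinf\). Since \(\CPinf\simeq K(\Z,2)\) is \(1\)-connected, the triangle identity forces the composite \(\pi_2\CPinf\xrightarrow{\Sigma}\pi_2\Sigma\Omega\CPinf\xrightarrow{\epsilon_*}\pi_2\CPinf\) to be the identity, while the suspension isomorphism \(\pi_1 S^1\cong\pi_2 S^2\) shows that \(\epsilon_*\) is an isomorphism on \(\pi_2\); as both source and target are standard models with \(\pi_2\cong\Z\), \(\epsilon\) is homotopic to the generator \(\C\bP^1\hookrightarrow\CPinf\). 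This is precisely the ``transposed Hurewicz map'' of the introduction, reflecting that \(\CPinf\) is the free strict abelian group on \(S^2\). Finally I would check that the adjunction isomorphism \(\pi_1\Omega^{\infty-1}X\cong\pi_2\Omega^{\infty-2}X\) used above is compatible with the two spectrum-structure identifications of these groups with \(\pi_0 X\), so that the right-hand column of the square is genuinely the identification appearing in \(X^2(\C\bP^1)\cong\pi_0 X\). Assembling these, the bottom arrow becomes precomposition with \(\C\bP^1\hookrightarrow\CPinf\), completing the identification of \(f_{2\to1}\) with the restriction map.
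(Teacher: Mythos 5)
Your proposal is correct and follows essentially the same route as the paper: both arguments reduce the claim to identifying the forgetful map with restriction along the counit $\Sigma\Omega\C\bP^{\infty}\to\C\bP^{\infty}$ via the recognition principle and the suspension--loop adjunction, and then identifying that counit with the standard inclusion $\C\bP^1\hookrightarrow\C\bP^{\infty}$ by a $\pi_2$ (resp.\ $H_1$) computation. The paper simply runs the adjunction in the opposite direction, writing the inclusion as $\Sigma S^1\to BS^1$ and observing that its transpose is the identity on $S^1$.
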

\begin{proof}
The inclusion \(S^2\cong\C\bP^1\xhookrightarrow{}\C\bP^{\infty}\simeq K(\Z,2)\) can be written as \(\Sigma S^1\to BS^1\). By suspension-loop adjunction, this is the transpose of a map \(S^1\to\Omega BS^1\simeq S^1\), and in fact its transpose is the identity (since it induces the identity on \(H_1\)). Thus our restriction map is given by upper horizontal arrow in
\[\begin{tikzcd}
\pi_0\Map_*(BS^1,\Omega^{\infty-2}X) \ar[r] \ar[d,phantom, sloped, "\cong"] & \pi_0\Map_*(\Sigma S^1,\Omega^{\infty-2}X) \ar[d, phantom, sloped, "\cong"]\\
\pi_0\Map_{\E_1}(S^1,\Omega^{\infty-1}X) \ar[r] & \pi_0\Map_*(S^1,\Omega^{\infty-1}X),
\end{tikzcd}\]
and, by the lower right square in the diagram of towers above, the lower horizontal arrow is \(f_{2\to 1}\).
\end{proof}
\begin{proof}[Proof of Theorem \ref{thm:A}]
A complex orientation is a lift of \(1\in\pi_0 R\) through the map induced by the inclusion \(\C\bP^1\xhookrightarrow{}\CPinf\). The previous result implies that this is the same as a lift through \(f_{2\to 1}\), i.e. a lift from an \(\E_1\)-element to an \(\E_2\)-element.
\end{proof}

\section{Higher Complex Orientations}
Knowing now that complex orientations are \(\E_2\) lifts of \(1\), a natural question to ask is what \(\E_n\) lifts of \(1\) look like for \(n\geq 3\). These ``higher complex orientations'' turn out to be quite scarce, however. For the case \(n=\infty\), an argument of Nardin and Peterson (\cite{nardin2018}) gives us an obstruction.

\begin{prp}
Let \(R\) be a ring spectrum with \(\pi_0 R\cong\Z\). If \(1\) is a strict element of \(R\), then \(\tau_{\geq 0}R\) retracts onto \(H\Z\) and the first \(k\)-invariant of \(\Omega^{\infty} R\) is trivial.
\end{prp}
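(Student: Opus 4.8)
The plan is to unwind the hypothesis concretely, produce the retraction by hand, and then read off the $k$-invariant claim formally. By the remark that the forgetful map from strict to weak elements is corepresented by the unit map $\eta\colon\bbS\to H\Z$, saying that $1$ is a strict element of $R$ means exactly that the unit $u\colon\bbS\to R$ (representing $1\in\pi_0R$) factors as $u\simeq f\circ\eta$ for some $f\colon H\Z\to R$. Since $H\Z$ is connective, $f$ factors through the connective cover to give $s\colon H\Z\to\tau_{\geq0}R$. As the candidate retraction I would take the canonical Postnikov truncation $r\colon\tau_{\geq0}R\to\tau_{\leq0}\tau_{\geq0}R=H\pi_0R=H\Z$, using the hypothesis $\pi_0R\cong\Z$.

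First I would compute $r\circ s\colon H\Z\to H\Z$ on homotopy. On $\pi_0$ the map $\eta$ is an isomorphism sending $1\mapsto1$ and $u$ sends $1\mapsto1$, so $s$ induces the identity on $\pi_0$; the truncation $r$ is likewise the identity on $\pi_0$. Hence $r\circ s$ is the identity on $\pi_0$. I would then invoke the rigidity of the integral Eilenberg--MacLane spectrum: the degree-$0$ stable operations satisfy $\pi_0\Map_{\Sp}(H\Z,H\Z)\cong\Z$, generated by the identity, so a self-map of $H\Z$ is determined up to homotopy by its effect on $\pi_0$. Therefore $r\circ s\simeq\mathrm{id}_{H\Z}$, exhibiting $H\Z$ as a retract of $\tau_{\geq0}R$ and settling the first claim.

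For the $k$-invariant I would use that spectra are idempotent-complete, so the idempotent $s\circ r$ splits and $\tau_{\geq0}R\simeq H\Z\oplus C$, where $C$ is the fiber of $r$; since $r$ and $s$ are $\pi_0$-isomorphisms, $C$ is $1$-connective with $\pi_1C\cong\pi_1R$. Truncating, $\tau_{\leq1}R\simeq H\Z\oplus\Sigma H\pi_1R$ is a product of Eilenberg--MacLane spectra, so the first $k$-invariant $H\Z\to\Sigma^2H\pi_1R$ is null. (Alternatively, one argues directly that $s$ descends to a section of $\tau_{\leq1}R\to H\Z$, and a section against the defining fiber sequence $\tau_{\leq1}R\to H\Z\xrightarrow{k_1}\Sigma^2H\pi_1R$ forces $k_1\simeq0$.) Since $\Omega^{\infty}$ is a right adjoint and commutes with $1$-truncation, applying it to the splitting yields $\tau_{\leq1}\Omega^{\infty}R\simeq\Z\times K(\pi_1R,1)$, a product, so the first $k$-invariant of the space $\Omega^{\infty}R$ is trivial as well.

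The main obstacle, and really the only non-formal input, is the rigidity step: the whole argument hinges on $\pi_0\Map_{\Sp}(H\Z,H\Z)\cong\Z$, which upgrades the easy fact that $r\circ s$ is a $\pi_0$-isomorphism to the statement that it is the identity, and hence a genuine retraction. This is standard but indispensable. A secondary point to handle with care is the passage between the spectrum-level $k$-invariant and that of the infinite loop space; here I would avoid any stable-range subtleties by simply transporting the direct-sum splitting of $\tau_{\leq1}R$ through $\Omega^{\infty}$ to obtain the product decomposition above, whose triviality of $k$-invariant is immediate.
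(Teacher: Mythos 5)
Your proposal is correct and follows essentially the same route as the paper: use the strict-element hypothesis to get a section $H\Z\to\tau_{\geq 0}R$, compose with the truncation $\tau_{\geq 0}R\to H\pi_0R=H\Z$, observe the composite is the identity on $\pi_0$ and hence (by rigidity of self-maps of $H\Z$) the identity, and conclude that the first Postnikov stage splits. Your write-up simply makes explicit the rigidity input and the passage from the spectrum-level splitting to the $k$-invariant of $\Omega^{\infty}R$, both of which the paper leaves implicit.
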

\begin{proof}
By assumption, we have a map \(H\Z\to R\) which is the identity on \(\pi_0\), and we can factor through the connective cover to get a map \(H\Z\to\tau_{\geq 0} R\). On the other hand, since \(\pi_0 R=\Z\), we have a ``cotruncation map'' \(\tau_{\geq 0} R\to H\Z\) which is also the identity on \(\pi_0\). The composition of these is an endomorphism of \(H\Z\) which is the identity on \(\pi_0\), and thus it is the identity. This means that the first fibration in the Postnikov tower for \(\Omega^{\infty}R\) splits, so its first \(k\)-invariant is trivial.
\end{proof}

Theorem \ref{thm:B} gives us an even more powerful no-go result, showing that essentially any ring of chromatic interest cannot have an \(\E_n\)-orientation for any \(n>2\).
\begin{proof}[Proof of Theorem \ref{thm:B}]
We wish to show that \(R\) is chromatically supported at heights \(0\) and \(\infty\), i.e. that \(L_{K(n)}R\simeq 0\) for all finite \(n>0\) at each prime \(p\). We may assume without loss of generality that \(R\) is \(p\)-local, and work in \(\Sp_{(p)}\).

We have a tower \[\bbS\xrightarrow{\simeq}\Sigma^{\infty-1}K(\Z,1)\to\Sigma^{\infty-2}K(\Z,2)\to\Sigma^{\infty-3}K(\Z,3)\to\dotsb\] such that an \(\E_n\)-orientation on \(R\) corresponds to a factorization of the unit \(\bbS\to R\) through the \(n\)th term of the tower.\footnote{Incidentally, this means that the free \(\Einf\)-ring on the \((n+1)\)st term of the tower is the universal example of an \(\Einf\)-ring with \(\E_n\)-orientation. By the Theorem, these rings must all have chromatic support concentrated in \(\{0,\infty\}\).} Therefore, we need to show that if the unit map of \(R\) factors through \(\Sigma^{\infty-3}K(\Z,3)\), \(R\) is \(K(n)\)-acyclic.

It is enough to show that the unit map of \(K(n)\otimes R\) is zero, since a ring with \(1=0\) is trivial. By Ravenel-Wilson's computation of the Morava K-theory of Eilenberg-MacLane spaces (Theorem 12.1 of \cite{ravenel1980}), we find that \(K(n)_*K(\Z,3)\) is concentrated in even degree. It follows that the map \(\bbS\to K(n)\otimes\Sigma^{\infty-3}K(\Z,3)\) must be zero, and so, therefore, is the unit of \(K(n)\otimes R\).
\end{proof}
We conclude with two questions that warrant further investigation.

\begin{quest}
Can this theory be generalized to \(G\)-orientations for more general groups \(G\)?
\end{quest}
There is an evident mod \(2\) analogue of this theory for \(MO\)-orientations, where we replace the essential equivalences \(\C\bP^1\cong S^2\) and \(\CPinf\simeq K(\Z,2)\) with \(\R\bP^1\cong S^1\) and \(\R\bP^{\infty}\simeq K(\Z/2,2)\). Both of these cases work because an orientation is determined by a single characteristic class (the first generalized Chern or Stiefel-Whitney class), which is a consequence of the splitting principle. The same is not true for other groups of interest, such as \(Spin\), where (writing \(\mathbb{T}\) for the maximal torus) \(H^*(BSpin(n))\to H^*(B\mathbb{T}_{Spin(n)})\) is not fully faithful or even injective for \(n\geq 8\). However, there is a generalized splitting principle for such groups, which may permit a ``twisted'' version of this story.

\begin{quest}
What is the obstruction theory for liftng an \(\E_n\)-element of a spectrum \(X\) to an \(\E_m\)-element for \(m>n\)?
\end{quest}
For the case \(n=1\), \(m=2\), this will evidently coincide with the cellular obstruction theory for extending from \(\C\bP^1\to\CPinf\). The case for greater \(n\) and \(m\) is less clear, however. By Dunn additivity (\cite{lurie2017}), we have a canonical equivalence \(\E_{n+1}(\Spaces_*)\simeq\E_1(\E_n(\Spaces_*))\), so it should be possible to give an inductive description using \(\E_1\) obstruction theory.

\printbibliography

\end{document}